\documentclass[dvipdfmx, 12pt, a4paper]{amsart}

\usepackage[utf8]{inputenc}
\usepackage[T1]{fontenc}
\usepackage{amsmath,amssymb,amsthm}
\usepackage{amsfonts}
\usepackage{graphicx}

\usepackage{tikz}
\usetikzlibrary{cd}

\theoremstyle{plain}
\newtheorem{thm}{Theorem}[section]
\newtheorem{prop}[thm]{Proposition}
\newtheorem{lem}[thm]{Lemma}

\newtheorem{claim}[thm]{Claim}

\theoremstyle{definition}

\newtheorem*{defi*}{Definition}

\theoremstyle{remark}
\newtheorem{rmk}[thm]{Remark}
\newtheorem*{ack}{Acknowledgments}

\newcommand{\ra}{\rightarrow}

\newcommand{\cO}{\mathcal{O}}

\newcommand{\bC}{\mathbb{C}}

\newcommand{\bP}{\mathbb{P}}

\DeclareMathOperator{\Rat}{Rat}
\DeclareMathOperator{\ev}{ev}

\newcommand{\ol}{\overline}

\begin{document}
	
\title[Rational curves on cubic hypersurfaces]{Rational curves on cubic hypersurfaces in positive characteristic}
\author{Natsume Kitagawa}
\address{Graduate School of Mathematics, Nagoya University, Furocho Chikusa-ku, Nagoya, 464-8602, Japan}
\email{natsume.kitagawa.e6@math.nagoya-u.ac.jp}
\maketitle

\begin{abstract}
	We study the moduli spaces of rational curves on cubic hypersurfaces in characteristic $\neq2,3$. As a result, we prove that for every integer $d\geq1$ the Kontsevich moduli space of stable maps on a smooth cubic hypersurface $X$ of degree $d$ is irreducible if the dimension of $X$ is greater than or equal to $4$.
\end{abstract}

\tableofcontents

\section{Introduction}

	The study of rational curves on Fano varieties is an interesting subject. Rational curves play an important role in classification theory, such as the study of rational connectedness or the minimal model program.
	In this paper, we are interested in the irreducibility of the moduli spaces of rational curves on Fano varieties in positive characteristic.
	We study cubic hypersurfaces of dimension $\geq3$, defined over an algebraically closed field of characteristic $\neq2,3$. In the case that the dimension is greater than or equal to $4$, we obtain the following theorem:
	
	\begin{thm}\label{MainThm}
		Let $X$ be a smooth cubic hypersurface of dimension $n\geq4$ defined over an algebraically closed field of characteristic $\neq2,3$. Then for every integer $d\geq1$, the Kontsevich moduli space $\ol{M}_{0,0}(X,d)$ is irreducible and generically parametrizes free curves.
	\end{thm}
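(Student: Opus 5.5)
We follow the strategy of Coskun--Starr, which proves the characteristic zero case, and make each step work in characteristic $p\neq 2,3$. We argue by induction on $d$, with the results for $d=1,2$ as the base cases. For $d=1$, $\ol{M}_{0,0}(X,1)$ is the Fano scheme of lines. For a smooth cubic of dimension $n\geq 3$ this scheme is smooth and irreducible of dimension $2n-4$ in characteristic $\neq 2$; for $n\geq 4$ it is also connected, so irreducible. A general line has normal bundle $\cO^{a}\oplus\cO(1)^{b}$, and hence is free. For $d=2$ we use that a conic spans a plane $P$ with $P\cap X=C\cup L$, where $L$ is a line. The space of conics therefore fibers over the Fano scheme of lines, with fiber an open subset of the space of planes through $L$. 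This gives irreducibility of the main component. For the boundary we check that reducible conics, and double covers of lines, lie in the closure of the smooth locus.

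For the inductive step, the first task is a dimension estimate. We show that every irreducible component of $\ol{M}_{0,0}(X,d)$ has dimension at least $(n-2)d+n-3$, which is the expected dimension $-K_X\cdot C+\dim X-3$. We then show that the locus of stable maps whose image is not a free curve has strictly smaller dimension. The tool is the bend-and-break count: a non-free component of dimension at least $(n-2)d+n-3$ sweeps out a proper subvariety $Y\subset X$. Restricting to $Y$ and counting gives a family of too many rational curves through a fixed point. Through a general point of $X$ the lines form a complete intersection of type $(1,2,3)$ in $\bP^{n-1}$, so there are few lines through a point, and this yields the contradiction.

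The point where positive characteristic intervenes is that families of curves may be inseparable, so the standard lemma "a family of rational curves dominating $X$ contains a free curve" can fail. We handle this directly by showing that a general line is free and that general conics are free. Both facts follow from the explicit description above, together with the separability of the map from lines through a general point, which uses $p\neq 2,3$. Freeness in higher degree is then propagated by smoothing combs. Concretely, a general point of any component contains, in its closure, a nodal union of a free curve of degree $d-1$ and a free line. Smoothing such a map gives a free curve, because $H^1$ of the normal sheaf vanishes on the reducible curve.

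The main step is irreducibility. Every component $M$ of $\ol{M}_{0,0}(X,d)$ has dimension $\geq (n-2)d+n-3$, and by the estimate above its general member is free. Imposing that the curve break, which is codimension one, shows that $M$ meets the boundary divisor of reducible maps. Using bend-and-break with two points fixed, or iterating the codimension count, we find that $M$ contains a map $C_1\cup C_2$ with $\deg C_1=d-1$, $\deg C_2=1$, with both components free and general. By induction the space of such pairs glued at a point is irreducible, because it is a fiber product of irreducible spaces over $X$ via evaluation maps, and the evaluation maps are flat since the curves are free. These boundary points are smooth points of $\ol{M}_{0,0}(X,d)$ by unobstructedness, so only one component can pass through them. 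Hence $M$ is unique. I expect the hardest part to be the dimension bound on non-free and boundary loci in characteristic $p$: one must rule out components consisting entirely of non-free or inseparable curves, for example curves lying in a cone of lines, without generic smoothness. I would first try to control these through the explicit geometry of lines through a point, which uses $p\neq 2,3$ to keep the Fano scheme of lines through a general point smooth.
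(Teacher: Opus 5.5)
Your proposal has a genuine gap where positive characteristic matters: the proof that every component generically parametrizes free curves. The assertion that a non-free component ``sweeps out a proper subvariety $Y\subset X$'' is the characteristic-zero consequence of generic smoothness. In characteristic $p$ a family of non-free curves can dominate $X$ through an inseparable evaluation map, which is the phenomenon you point out a paragraph later. Your substitute is that every component contains in its closure a nodal union of a free curve of degree $d-1$ and a free line. This is asserted rather than proved, and it is essentially what must be shown. Bend-and-break yields \emph{some} reducible degeneration, but gives no control over the splitting degrees or the freeness of the pieces. A component made of non-free curves could a priori degenerate only to maps with non-free pieces.

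The missing ingredient is a bound on the fibres of the evaluation maps over \emph{every} point: $\dim\ev_d^{-1}(x)=(n-1)d-2$ for $x$ outside a finite set $K$, and $(n-1)d-1$ on $K$ (Glas). Knowing that lines through a \emph{general} point form a complete intersection (of type $(2,3)$ in $\bP^{n-1}$, not $(1,2,3)$) says nothing about the special points where your $Y$ lives. With this bound the paper argues as in Beheshti--Riedl and shows by induction that the locus $\Lambda_d$ of maps with some non-free component has $\dim\Lambda_d\le d(n-1)$. If $\dim\Lambda_d>2n-2$, bend-and-break through two points of the swept-out locus (no properness needed) gives a codimension-one sublocus of maps glued from a non-free degree-$e$ map and a degree-$(d-e)$ map. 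That sublocus has dimension at most $e(n-1)+1+\bigl((d-e)(n-1)-2\bigr)$. The base case $\dim\Lambda_1\le n-2$ is proved via the Gauss map. It is much stronger than ``a general line is free'', which only gives $2n-5$ and is too weak for $n\ge5$.

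Your expected dimension is also wrong. Since $-K_X=\cO_X(n-1)$, it is $(n-1)d+n-3$, not $(n-2)d+n-3$; your formula gives $2n-5$ for lines, contradicting your own $2n-4$. With your value, the comparison against $d(n-1)$ fails once $d\ge n-3$.

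The irreducibility step has two further problems.
First, a fibre product over $X$ of irreducible spaces via smooth evaluation maps need not be irreducible. You need the general fibre of one of the evaluation maps to be irreducible. So the induction hypothesis must be that $\ev_e^{-1}(x)$ is irreducible for general $x$. The base case is that lines through a general point form a connected complete intersection when $n\ge4$, smooth because such a point lies on no non-free line.
Second, the passage from ``$M$ meets the boundary'' to ``$M$ contains a general free $(d-1)+1$ union'' is the heart of Coskun--Starr's argument. It uses two facts. Every component of $\ev_d^{-1}(x)$ has dimension exactly $(n-1)d-2$. And $\ev_d^{-1}(x)\cap\Delta_{\{e,\{1\}\},\{d-e,\emptyset\}}$ is irreducible of one dimension less, via uniqueness of the main component of $\pi^{-1}(\ev_e^{-1}(x))\times_X\ol{M}_{0,1}(X,d-e)$. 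Consequently $M$ must contain a whole boundary stratum. That stratum contains maps with a general line through $x$ attached, so $M$ contains all of the $e=1$ stratum, and hence chains of free lines, which are smooth points. Both facts again rest on the uniform fibre-dimension bound, which your argument never supplies.
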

	
	This result extends \cite[Theorem 1.1]{CoskunStarr2009} which treats the case of characteristic $0$. Generalizing the theorem by Coskun and Starr to positive characteristic poses another difficulty: the existence of inseparable maps. Because of this, we cannot deduce the existence of free rational curves solely from the dominance of the evaluation maps.
	To overcome this difficulty, we use the dimension counting argument as in \cite{BeheshtiRiedl2021linear} that calculates the upper bound of the dimension of the non-free locus on $\ol{M}(X,d)$ inductively. The result in \cite{glas2025} on the dimension of fibres of evaluation maps enables us such an argument. Once the existence of free curves is proved, we can follow the argument in \cite[Proof of Theorem 1.1]{CoskunStarr2009} and we obtain the above result.
	
	By almost the same dimension counting argument, we also obtain the following result about cubic threefolds.
	
	\begin{thm}\label{MainThm3fold}
		Let $X$ be a smooth cubic threefold defined over an algebraically closed field of characteristic $\neq2,3$. Then for every integer $d\geq1$, the Kontsevich moduli space $\ol{M}_{0,0}(X,d)$ generically parametrizes free curves.
	\end{thm}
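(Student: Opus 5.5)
The plan is to show that every irreducible component of $\ol{M}_{0,0}(X,d)$ contains a free curve. Equivalently, we bound from above the dimension of the locus of non-free stable maps and show that it is strictly smaller than the lower bound $-K_X\cdot C + n - 3 = d(n-1)+n-3$ that every component of $\ol{M}_{0,0}(X,d)$ satisfies. For a smooth cubic threefold ($n=3$, $-K_X=2H$) this lower bound is $2d$.

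We induct on $d$. For $d=1$, the Fano scheme of lines on a smooth cubic threefold is a smooth surface in characteristic $\neq 2$. Lines of the second type (normal bundle $\cO(1)\oplus\cO(-1)$) form a curve, so the general line is free. For $d=2$, a conic spans a plane $P$ with $P\cap X = C\cup L$. This gives a description of conics by residual lines, and the non-free conics can be controlled directly.

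For the inductive step, let $M\subset\ol{M}_{0,0}(X,d)$ be a component whose general member is non-free; we want a contradiction with $\dim M\ge 2d$. The key tool is the bound of \cite{glas2025} on fibres of the evaluation map. Applied to $\ev\colon\ol{M}_{0,1}(X,d)\to X$, it bounds the family of degree-$d$ curves through a general point, or through points of a proper subvariety, without assuming $\ev$ is separable. The dimension counting runs as in \cite{BeheshtiRiedl2021linear}:
\begin{enumerate}
\item If the curves of $M$ sweep out a subvariety $Y\subsetneq X$, bound $\dim M$ by $\dim Y$ plus the fibre dimension of $\ev$ over $Y$, minus one. With the fibre estimates for cubic threefolds, this gives at most $2d-1$.
\item If the curves of $M$ dominate $X$ but a general member is non-free, inseparability of $\ev$ is the only possible cause, since a separable dominant $\ev$ gives freeness by generic smoothness. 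In this case, use Bend-and-Break together with the fibre bound to degenerate a general member of $M$ into a reducible stable map of degree $d=d_1+d_2$.
\end{enumerate}
The boundary divisor of maps with two components has dimension $\dim\ol{M}_{0,1}(X,d_1)+\dim\ol{M}_{0,1}(X,d_2)-3$. If one of the pieces is non-free, this is small by the induction hypothesis. If both pieces are free, the glued map is unobstructed and smooths to free curves, contradicting non-freeness of a general member of $M$. Either way, $\dim M\le 2d-1$.

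The main obstacle is step (2): ruling out a dominating family in which $\ev$ is inseparable (a Frobenius-type phenomenon), so that no member is free. Here the fibre bound of \cite{glas2025} has to replace generic smoothness. I would first establish that through a general point of $X$ the degree-$d$ curves form a family of dimension at most $d(n-1)-2$, hence at most $2d-2$ for $n=3$. Combining this with Bend-and-Break, a family of dimension $\ge 2d$ through two general points must break. The induction then reduces to the reducible case above. The case $d=1$ needs separate care, and possibly $d=2$ as well, where the classical geometry of lines and conics has to be checked directly in characteristic $\neq 2,3$.
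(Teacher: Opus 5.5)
Your proposal is correct and follows essentially the paper's route. You induct on $d$ to show the non-free locus has dimension at most $2d-1<2d$. You use bend-and-break to produce a codimension-one boundary locus, and you bound that locus by $(2e-1)+1+(2(d-e)-2)=2d-2$ via the fibre-dimension result of \cite{glas2025}.

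The one point to firm up is that $d=2$ is necessarily a base case, not just possibly one, since bend-and-break needs dimension $>2n-2=4$. There you must handle both components of $\ol{M}_{0,0}(X,2)$. Smooth conics are generically free because they specialize to a chain of two free lines. Double covers of lines, which your sketch omits, are generically free because the general line is free.
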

	
	In characteristic $0$, \emph{Geometric Manin's Conjecture}, proposed in \cite{LT2019GMC}, predicts the structure of moduli spaces of rational curves on Fano varieties; namely, the number of irreducible components of a given degree, the dimension of the irreducible components, and so on.
	
	The moduli spaces of rational curves on del Pezzo surfaces in positive characteristic are studied in \cite{BLRT2023Rat} from the viewpoint of Geometric Manin's Conjecture. Let $S$ be a del Pezzo surface, and $\ol\Rat(S)$ be the union of the irreducible components of $\ol{M}_{0,0}(S)$ which generically parametrize stable maps from irreducible domains.
	For a smooth cubic surface $S$, \cite[Theorem 1.1]{BLRT2023Rat} states that, if $S$ is not the Fermat cubic surface defined over a field of characteristic $2$, every dominant component of $\ol\Rat(S)$ is separable and generically parametrizes free rational curves. Theorem \ref{MainThm} and Theorem \ref{MainThm3fold} partially generalize this result to higher dimensional cases, and give new examples supporting Geometric Manin's Conjecture in positive characteristic.

	\begin{rmk}
		In \cite[Remark 1.4]{BLRT2023Rat} it is pointed out that the exceptions in \cite[Theorem 1.1]{BLRT2023Rat} coincide with non-globally $F$-regular del Pezzo surfaces if the degree of del Pezzo surface is $\geq2$ (See also \cite[Section 3]{LT2026GMCinp}). We remark on globally $F$-regularity of smooth cubic hypersurfaces and freeness of lines on non-globally $F$-regular ones.
		
		By \cite[Theorem B]{KawakamiTanaka2025dP}, a smooth cubic hypersurface is not globally $F$-regular only if the characteristic of the ground field is $2$. Combining Fedder's criterion and the argument in the proof of \cite[Theorem 4.4]{FFK2010GMRZ}, we see that a smooth cubic hypersurface which is not globally $F$-regular is projectively equivalent to the Fermat cubic hypersurface.
		
		Let $X$ be the Fermat cubic hypersurface of dimension $n\geq3$ defined over an algebraically closed field of characteristic $2$. Then any line on $X$ is non-free (See \cite[Proposition 1.2]{Furukawa2012cubic} for $n=3$ and \cite[Proof of Theorem 0.4]{FFK2010GMRZ} for $n\geq4$). In particular, Theorem \ref{MainThm} and Theorem \ref{MainThm3fold} do not hold for the Fermat cubic hypersurfaces in characteristic $2$.
	\end{rmk}

\subsection{Related works}

	In characteristic $0$, there are numerous results about moduli spaces of rational curves on Fano varieties. The work most relevant to the present paper is \cite{CoskunStarr2009}, which studied the space of rational curves on smooth cubic hypersurfaces of dimension $\geq4$ defined over $\bC$.
	\cite{Okamura2024} extended the results of \cite{CoskunStarr2009} to the case of del Pezzo varieties.
	
	Despite the progress in characteristic $0$, there are relatively few results in positive characteristic. The moduli spaces of rational curves on del Pezzo surfaces were studied in \cite{BLRT2023Rat}. For affine hypersurfaces of low degree, \cite{BrowningSawin2020geometric} proved the irreducibility of the moduli spaces using the circle method. \cite{glas2025} showed that, for smooth cubic hypersurfaces or smooth complete intersections of two quadrics of dimension $\geq3$, the moduli spaces of rational curves of a given degree on them have the expected dimensions. \cite{Manzuacteanu2021cubic} studied the moduli spaces of rational curves on smooth cubic hypersurfaces over finite fields. \cite{Beheshti2025separable} showed the separable rational connectedness of low degree Fano hypersurfaces by proving the existence of free lines with essentially sharp assumptions.
	
	Regarding Manin's conjecture and Geometric Manin's conjecture, \cite{Tanimoto2025Book} is a good reference. Also, \cite{LT2026GMCinp} provides a nice description of formulations and developments of the conjectures in positive characteristic.

\begin{ack}
	The author is grateful to his advisor, Professor Sho Tanimoto, for helpful discussions and continuous support. The author also thanks Professor Brian Lehmann, Professor Katsuhisa Furukawa and Fumiya Okamura for helpful discussions, answering his questions, careful reviews and constructive feedback. 
	%The author also would like to thank Shiho Oguihara. Thank you!!!
	
	This work was supported by JST SPRING, Grant Number JPMJSP2125.
\end{ack}

\section{Free lines}

	We work over an algebraically closed field $k$ of characteristic $\neq2,3$.

	\begin{lem}\label{DimofNonFreeLines}
		Let $X$ be a smooth cubic hypersurface of dimension $n\geq3$. Let $\Lambda_1\subset\ol{M}_{0,0}(X,1)$ be the locus parametrizing non-free lines. Then $\dim\Lambda_1\leq n-2$.
	\end{lem}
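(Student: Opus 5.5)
The plan is to reduce non-freeness of a line to a degeneracy of the Gauss map along it, and then bound the dimension of the degeneracy locus.

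\emph{Step 1: non-free means second type.} For a line $L\subset X$ we have $N_{L/X}\cong\bigoplus_i\cO(a_i)$ of rank $n-1$ and degree $n-3$, with all $a_i\le1$ because $N_{L/X}\subset N_{L/\bP^{n+1}}=\cO(1)^{n}$. Hence either $N_{L/X}\cong\cO^{2}\oplus\cO(1)^{n-3}$ (lines of first type, which are free) or $N_{L/X}\cong\cO(-1)\oplus\cO(1)^{n-2}$ (lines of second type, which are non-free). So $\Lambda_1$ is exactly the locus of second-type lines.

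\emph{Step 2: a normal form.} Choose coordinates with $L=\{x_2=\dots=x_{n+1}=0\}$ and write
$$F=\sum_{i=2}^{n+1}x_iQ_i(x_0,x_1)+(\text{terms of degree}\ge2\text{ in }x_2,\dots,x_{n+1}).$$
The normal bundle sequence identifies $N_{L/X}$ with the kernel of $\cO(1)^n\ra\cO(2)$, $(s_i)\mapsto\sum s_iQ_i$. The summand $\cO(-1)$ appears exactly when the binary quadrics $Q_2,\dots,Q_{n+1}$ span a subspace of $H^0(\cO_L(2))$ of dimension $\le2$ rather than the whole $3$-dimensional space. Smoothness of $X$ along $L$ rules out a common zero of the $Q_i$, and this is used below. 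Geometrically, $L$ is of second type if and only if the Gauss map $\gamma\colon X\ra\bP^{n+1\,*}$, $x\mapsto[\partial F(x)]$, maps $L$ onto a line instead of a conic. Equivalently, there is a codimension-$2$ linear space $\Pi_L\supset L$ that is tangent to $X$ along all of $L$.

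\emph{Step 3: dimension count.} A first approach is an incidence count over $G(2,n+2)$. The condition on $F$ that $L\subset X$ is $4$ linear conditions, and the rank condition on the $n\times3$ coefficient matrix of the $Q_i$ has codimension $n-2$. This gives the bound for a \emph{general} cubic, since $2n-(n+2)=n-2$. However, Lemma~\ref{DimofNonFreeLines} is stated for \emph{every} smooth $X$, so the generic count is not enough.

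For an arbitrary smooth $X$ I would instead use the Gauss map. In characteristic $\neq2$ the map $\gamma$ is a finite morphism of degree $2^{n+1}$ onto $\bP^{n+1\,*}$. The assignment $L\mapsto\gamma(L)$ sends second-type lines to lines in $\bP^{n+1\,*}$, and it has finite fibers because $\gamma$ is finite. So it suffices to bound the family of lines $\ell^*\subset\bP^{n+1\,*}$ whose preimage $\gamma^{-1}(\ell^*)$ contains a line of $X$ on which $\gamma$ has degree $2$.

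Over such an $\ell^*$, the preimage $\gamma^{-1}(\ell^*)$ is a curve, namely the locus of points where the tangent hyperplane contains $\Pi=(\ell^*)^\perp$. The line $L$ is then a component of the singular locus of $X\cap\Pi$. I would therefore bound the dimension of the set of codimension-$2$ linear spaces $\Pi$ for which $X\cap\Pi$ is singular along a line. Using the normal form relative to $\Pi$, the singular locus of $X\cap\Pi$ along a line imposes $\ge 2n-\bigl((n-2)+2\bigr)$ conditions inside $G(n,n+2)$, which has dimension $2n$. I would aim for this to leave a family of dimension $n-2$.

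\emph{Main obstacle.} The hardest step is making the last count uniform over all smooth $X$. I would argue by contradiction. Suppose a component $\Sigma\subset\Lambda_1$ has dimension $\ge n-1$. Then the lines in $\Sigma$ sweep out a subvariety $S\subset X$. Along the lines of $\Sigma$ the Gauss map collapses the conic direction, so at a general point of $S$ the differential $d\gamma$ has a kernel, or $\gamma$ fails to be separable there. I would then try to contradict finiteness of $\gamma$, or nondegeneracy of the Hessian of $F$ (which holds in characteristic $\neq2,3$), by showing that $\gamma|_S$ has positive-dimensional fibers when $\dim\Sigma\ge n-1$. Handling possible inseparability of $\gamma|_S$ in characteristic $p$ is where I expect the real difficulty to lie. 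If it cannot be resolved directly, I would fall back on the fiber-dimension results of \cite{glas2025}.
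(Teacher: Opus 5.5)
Your Steps 1 and 2 are correct and match the paper's setup. $L$ is non-free exactly when the $Q_i$ span a $2$-dimensional subspace of $H^0(\cO_L(2))$, i.e.\ when $\gamma|_L$ is a double cover of a line.

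The gap is Step 3, which never produces a bound valid for every smooth $X$:
\begin{itemize}
\item The count in $G(n,n+2)$ has the same defect as the count over $G(2,n+2)$ that you rightly discard: for a fixed $F$ the conditions need not be independent. As written it also misses the target, since $n$ conditions in a $2n$-dimensional Grassmannian leave $n$, not $n-2$.
\item The contradiction sketch rests on a false heuristic. In characteristic $\neq2$ the map $\gamma|_L\colon L\ra\gamma(L)$ is a separable double cover ramified at only two points. So $d\gamma$ does not degenerate at a general point of $L$, and nothing there contradicts finiteness of $\gamma$.
\item Minor error: $\gamma$ is not onto $(\bP^{n+1})^*$. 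The finite map of degree $2^{n+1}$ is the polar map $\bP^{n+1}\ra(\bP^{n+1})^*$; its restriction to $X$ is a finite map onto the dual hypersurface $X^*$.
\item Falling back on \cite{glas2025} does not help. Proposition~\ref{DimOfFibres} bounds all lines through a point, not the non-free ones.
\end{itemize}

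The missing idea is to use the covering involution $\iota_L$ of $\gamma|_L$, together with the facts that $\gamma\colon X\ra X^*$ is finite and birational. Work on the pointed family $\widetilde\Lambda_1\subset\ol{M}_{0,1}(X,1)$.
\begin{itemize}
\item If non-free lines passed through a general $x\in X$, then $\iota_L(x)\neq x$ would be a second point of $\gamma^{-1}(\gamma(x))$, contradicting birationality. Hence $\dim\ev(\widetilde\Lambda_1)\le n-1$.
\item For fixed $x$, a non-free line $L\ni x$ with $\iota_L(x)\neq x$ is the line through $x$ and a point of the finite set $\gamma^{-1}(\gamma(x))$. So there are finitely many such lines.
\item The pairs $(L,x)$ with $\iota_L(x)=x$ form a codimension-$1$ locus of $\widetilde\Lambda_1$.
\end{itemize}
Therefore $\ev|_{\widetilde\Lambda_1}$ is generically finite, $\dim\widetilde\Lambda_1\le n-1$, and $\dim\Lambda_1\le n-2$.

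Your argument by contradiction can be repaired along these lines. Take a positive-dimensional family of non-free lines through a general point $x$ of the swept-out locus, with $\iota_L(x)\neq x$; it yields infinitely many distinct points in one fibre of $\gamma$. However, the case in which the lines sweep out all of $X$ needs the birationality of $\gamma$, which your proposal never invokes.
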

	
		\begin{proof}
			This is proved in \cite[Chapter 2, Lemma 2.12]{Huybrechts2023geometry}. We give the proof for the convenience of the reader.
			
			Fix a line $L$ on $X$. In some coordinate $x_0,\ldots,x_{n+1}$ on $\bP^{n+1}$, $L$ is written as $L=\{x_1=\cdots=x_{n+1}=0\}$. Consider the exact sequence of normal bundles:
			$$0\ra N_{L/X}\ra N_{L/\bP^{n+1}}\ra N_{X/\bP^{n+1}}\ra0.$$
			This is isomorphic to
			$$0\ra N_{L/X}\ra\cO_L(1)^{\oplus n}\ra\cO_L(3)\ra0.$$
			Tensoring $\cO_L(-1)$, we have
			$$0\ra N_{L/X}(-1)\ra\cO_L^{\oplus n}\ra\cO_L(2)\ra0.$$
			Let $\alpha:\cO_L^{\oplus n}\ra\cO_L(2)$ be the surjective morphism in the above. Then by \cite[Corollary 6.16]{3264} and Euler's formula, $\alpha$ is written as
			$$\alpha=\left(\left.\frac{\partial F}{\partial x_i}\right|_L\right).$$
			Recall that $L$ is free (as a rational curve on $X$) if and only if $H^1(L,N_{L/X}(-1))=0$. This is equivalent to the condition that the induced map $\alpha:H^0(L,\cO_L^{\oplus n})\ra H^0(L,\cO_L(2))$ is surjective.
			
			Since $X$ is smooth, the rank of $\alpha:H^0(L,\cO_L^{\oplus n})\ra H^0(L,\cO_L(2))$ is at least $2$. Now consider the Gauss map $\gamma_X:X\ra X^*$ and the image $\gamma_X(L)$, where $X^*$ is the dual variety of $X$. If the rank of $\alpha$ is $3$ (i.e., $\alpha$ is surjective), the image $\gamma_X(L)$ is a smooth conic and $\gamma_X|_L:L\ra\gamma_X(L)$ is injective. If the rank of $\alpha$ is $2$, $\gamma_X|_L:L\ra\gamma_X(L)$ is a double covering of a line on $X^*$.
			
			Let $\widetilde{\Lambda}_1\subset\ol{M}_{0,1}(X,1)$ be the family of non-free lines. Since $\gamma_X:X\ra X^*$ is birational, the image $\ev(\widetilde{\Lambda}_1)\subset X$ is a proper closed subset, where $\ev:\ol{M}_{(0,1)}(X,1)\ra X$ is the evaluation map. Hence the dimension of $\ev(\widetilde{\Lambda}_1)$ is at most $n-1$. So it suffices to show that the restriction of the evaluation map $q:=\ev|_{\widetilde{\Lambda}_1}:\widetilde{\Lambda}_1\ra\ev(\widetilde{\Lambda}_1)\subset X$ is generically finite.
			
			For any $L\in\Lambda_1$, let $\iota_L:L\ra L$ be the covering involution for the restriction of the Gauss map $\gamma_X|_L:L\ra X^*$. Consider a map $\iota_x:q^{-1}(x)\ra X;L\ra\iota_L(x)$ for $x\in\ev(\widetilde{\Lambda}_1)$. Since $\gamma_X(\iota_x(L))=\gamma_X(x)$ for any $L\in q^{-1}(x)$, the image of $\iota_x$ is a finite set as the Gauss map is a finite morphism.
			
			For a line $L\in q^{-1}(x)$, if $x\neq\iota_L(x)$ then $L$ is the unique line through $x$ and $\iota_L(x)$. Therefore $\iota_x$ is injective on the subset of lines $L$ in $q^{-1}(x)$ satisfying $x\neq\iota_L(x)$, and thus this set is finite. Note that, fiberwise with respect to the forgetful map $\ol{M}_{0,1}(X,1)\ra\ol{M}_{0,0}(X,1)$, the set of points $(L,x)\in\widetilde{\Lambda}_1$ satisfying $x=\iota_L(x)$ is of codimension $1$. Thus, for a general point $x\in X$ the dimension of $q^{-1}(x)$ cannot be greater than $0$. Therefore $q$ is generically finite, and thus the dimension of $\widetilde{\Lambda}_1$ is at most $n-1$.
		\end{proof}

	\begin{prop}\label{LinesOnCubic}
		Let $X$ be a smooth cubic hypersurface of dimension $n\geq3$. Then the space $\ol{M}_{0,0}(X,1)$ of lines on $X$ is smooth, irreducible and generically parametrizes free lines.
	\end{prop}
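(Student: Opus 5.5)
The plan is to identify $\ol{M}_{0,0}(X,1)$ with the Fano scheme $F(X)$ of lines on $X$. This is legitimate because a degree $1$ stable map has irreducible domain and is an isomorphism onto a line. I would then prove the three properties in the following order: smoothness of expected dimension, then connectedness, and finally irreducibility and generic freeness together.

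\textbf{Smoothness and dimension.} For a line $L\subset X$ the Zariski tangent space to $F(X)$ at $[L]$ is $H^0(L,N_{L/X})$. The normal sequence in the proof of Lemma \ref{DimofNonFreeLines} is
$$0\ra N_{L/X}\ra\cO_L(1)^{\oplus n}\ra\cO_L(3)\ra0.$$
On $H^0$, the map $H^0(\cO_L(1)^{\oplus n})\ra H^0(\cO_L(3))$ is given by multiplication by the linear forms $\alpha_i=\partial F/\partial x_i|_L$. These span a space of rank at least $2$, again by the proof of Lemma \ref{DimofNonFreeLines}, so the span contains two independent linear forms $s,t$ on $L\cong\bP^1$. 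Then $H^0(\cO(1))\cdot s+H^0(\cO(1))\cdot t=H^0(\cO(2))$, and multiplying once more by $H^0(\cO(1))$ shows that every cubic is attained. Hence the map on $H^0$ is surjective and $H^1(L,N_{L/X})=0$, since $H^1(\cO_L(1)^{\oplus n})=0$. Therefore $F(X)$ is smooth of dimension $\chi(N_{L/X})=2n-4$ at every point. In particular every component has dimension exactly $2n-4$.

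\textbf{Generic freeness.} By Lemma \ref{DimofNonFreeLines} the non-free locus $\Lambda_1$ has dimension at most $n-2$. Since $n\geq3$, we have $n-2<2n-4$, so $\Lambda_1$ contains no component of $F(X)$. Thus every component generically parametrizes free lines.

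\textbf{Irreducibility.} Because $F(X)$ is smooth, irreducibility is equivalent to connectedness. This is the main obstacle. My first approach is the standard one: $F(X)$ is the zero locus of a regular section of $\mathrm{Sym}^3 S^\vee$ on $G(2,n+2)$. This bundle is ample (or at least globally generated with enough positivity), and the zero locus has the expected codimension $4$. So the Koszul complex resolves $\cO_{F(X)}$, and Bott/Kempf vanishing on the Grassmannian, which holds in any characteristic for the relevant weights or can be checked via Kempf vanishing, gives $H^0(F(X),\cO)=k$. Hence $F(X)$ is connected.

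If that vanishing is delicate in positive characteristic, the fallback is a degeneration argument. The family of Fano schemes over the space of smooth cubics is smooth and proper, so by Stein factorization and Zariski's connectedness the number of connected components is locally constant. It therefore suffices to check connectedness for one cubic. The Fermat cubic works in characteristic $\neq3$, since its lines can be written down explicitly, or one can use any cubic containing a connecting configuration of lines. Alternatively, connectedness for $n\geq3$ follows from the known Barth--Van de Ven type results valid for $2n-4\geq2$. I would commit to the Koszul/Kempf route first.
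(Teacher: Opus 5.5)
Your generic-freeness step is the paper's argument, and reducing irreducibility to connectedness via smoothness is also what the paper does. However, the smoothness computation is wrong as written, and connectedness is not actually proved.

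In the smoothness step, the $\partial F/\partial x_i|_L$ are quadrics, not linear forms. They are sections of $\cO_L(2)$, which is why the map lands in $\cO_L(3)$. With the degrees corrected, ``the span has dimension at least $2$'' does not give surjectivity of $H^0(\cO_L(1)^{\oplus n})\ra H^0(\cO_L(3))$. For example, a span $\langle \ell m,\ell m'\rangle$ with a common linear factor $\ell$ has image $\ell\cdot H^0(\cO_L(2))$, which has dimension $3<4$. The input you need is that the $\partial F/\partial x_i|_L$ have no common zero on $L$, which is smoothness of $X$ along $L$. Then a $2$-dimensional span is generated by coprime quadrics $q,q'$, so $(a,b)\mapsto aq+bq'$ is injective on $H^0(\cO_L(1))^{\oplus 2}$ and hence onto $H^0(\cO_L(3))$. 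The paper instead reads off the splitting type. $N_{L/X}$ is a subbundle of $\cO_L(1)^{\oplus n}$ of rank $n-1$ and degree $n-3$, so its summands $\cO_L(a_i)$ satisfy $a_i\le 1$ and $\sum(1-a_i)=2$. This forces $\cO_L^{\oplus 2}\oplus\cO_L(1)^{\oplus n-3}$ or $\cO_L(-1)\oplus\cO_L(1)^{\oplus n-2}$, hence $H^1(N_{L/X})=0$.

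The real gap is connectedness in characteristic $p$, which is the only nontrivial input and which you only gesture at. First, $\mathrm{Sym}^3S^\vee$ is not ample on $G(2,n+2)$: on a line of the Grassmannian, $S^\vee$ restricts to $\cO\oplus\cO(1)$. Second, the Koszul argument needs $H^j(G,\wedge^j(\mathrm{Sym}^3S^\vee)^\vee)=0$ for $1\le j\le 4$. These are negative bundles, so Kempf's theorem (vanishing of higher cohomology for dominant weights) does not apply to them directly. Bott's theorem is also not available in characteristic $p$ in general. So ``holds in any characteristic for the relevant weights'' is exactly the claim that needs proof. Your fallback correctly reduces to one smooth cubic but then verifies none: writing down some lines on the Fermat cubic does not show that every component of its Fano scheme meets them.

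The paper closes this step by quoting \cite[Theorem 1.16 (i)]{AltmanKleiman77}, which gives connectedness of $F(X)$ in arbitrary characteristic. There are two alternatives. You could check the four vanishings directly: for $p\ge 5$ the exterior powers of $\mathrm{Sym}^3S$ split as in characteristic $0$, and each piece can be handled by pushing forward along $\bP(S)\ra G$ and then to $\bP^{n+1}$, together with Kodaira vanishing on the Frobenius-split Grassmannian. Or you could complete your fallback by lifting $X$ over $W(k)$. There the relative Fano scheme is smooth and proper, so its geometric fibres have the same number of connected components as in characteristic $0$.
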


		\begin{proof}
			The smoothness of $\ol{M}_{0,0}(X,1)$ follows from \cite[Corollary 1.12]{AltmanKleiman77}. However, in our situation we can prove it in an easier way; let $L$ be a line on $X$. By a normal bundle calculation, $N_{L/X}$ is isomorphic to either of the following:
			$$\cO_L^{\oplus2}\oplus\cO_L(1)^{n-3,},\quad\text{or}\quad\cO_L(-1)\oplus\cO_L(1)^{n-2}.$$
			In either case, we obtain $H^1(L,N_{L/X})=0$. This implies that $\ol{M}_{0,0}(X,1)$ is smooth.
			
			By \cite[Theorem 1.16 (i)]{AltmanKleiman77}, $\ol{M}_{0,0}(X,1)$ is connected. Since it is smooth, we see that $\ol{M}_{0,0}(X,1)$ is irreducible.
			The last statement follows from Lemma \ref{DimofNonFreeLines} since the expected dimension of $\ol{M}_{0,0}(X,1)$ is equal to $2n-4>n-2$.
		\end{proof}

	\begin{lem}\label{IrredFibres}
		Let $X$ be a smooth cubic hypersurface of dimension $n\geq4$. Then a general fibre of the evaluation map $\ev:\ol{M}_{0,1}(X,1)\ra X$ is irreducible.
	\end{lem}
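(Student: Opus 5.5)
Fix a general point $x\in X$ and choose affine coordinates with $x$ at the origin. Write the equation of $X$ as $F=f_1+f_2+f_3$, where $f_i$ is homogeneous of degree $i$ in $y_1,\ldots,y_{n+1}$. A line through $x$ in the direction $[v]\in\bP^n$ lies on $X$ if and only if $f_1(v)=f_2(v)=f_3(v)=0$. Hence the fibre $\ev^{-1}(x)$ is identified with the subscheme
$$Y_x=\{f_1=f_2=f_3=0\}\subset\bP^n,$$
that is, a complete intersection of a quadric and a cubic inside the hyperplane $\bP^{n-1}=\{f_1=0\}$, which is the projectivized tangent space. The plan is to show that for general $x$ this is a complete intersection of the expected dimension $n-3\geq1$, and that it is smooth off a small locus, or at least normal. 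Then we conclude by the connectedness of positive-dimensional complete intersections.

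\textbf{Step 1: dimension.} By Proposition \ref{LinesOnCubic}, $\ol{M}_{0,1}(X,1)$ is irreducible of dimension $2n-3$, and $\ev$ is dominant since free lines exist. Hence a general fibre has dimension $n-3$, so $Y_x$ is a complete intersection of dimension $n-3\geq1$ for general $x$. In particular $Y_x$ is connected, by Hartshorne's connectedness for complete intersections of positive dimension.

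\textbf{Step 2: reducedness and normality.} A point $[L]\in Y_x$ is a smooth point of the fibre exactly when $\ev$ is smooth at $(L,x)$. This holds when $N_{L/X}$ is globally generated, i.e.\ when $L$ is free. So the singular locus of $Y_x$ is contained in $q^{-1}(x)$, where $q$ is the restricted evaluation map from the proof of Lemma \ref{DimofNonFreeLines}.

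There we showed $\dim\widetilde{\Lambda}_1\leq n-1$ and $\dim\ev(\widetilde\Lambda_1)\leq n-1$. We must separate two cases:
\begin{itemize}
\item If $x\notin\ev(\widetilde\Lambda_1)$, then $Y_x$ is smooth.
\item If $\ev(\widetilde\Lambda_1)$ has dimension $n-1$, then $q$ is generically finite, but a general $x\in X$ avoids this proper closed subset anyway.
\end{itemize}
So for general $x\in X$ every line through $x$ is free, and $Y_x$ is smooth. Note that generic smoothness of $\ev$ could fail in positive characteristic; this argument avoids it because freeness directly gives smoothness of $\ev$ at the point.

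\textbf{Step 3: conclusion.} A connected smooth projective scheme is irreducible. Hence $Y_x=\ev^{-1}(x)$ is irreducible for general $x$.

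The main obstacle I expect is Step 2, namely making sure that the implication from freeness of $L$ to smoothness of the fibre at $[L]$ holds in characteristic $p$. One needs the differential $d\ev$ at $(L,x)$ to be surjective, which amounts to surjectivity of the evaluation $H^0(N_{L/X})\to N_{L/X}|_x$. This is characteristic-free for globally generated bundles.

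A second point to verify is that the fibre has the expected dimension even at the special $x$ we might want, but for general $x$ this follows from the irreducibility of $\ol{M}_{0,1}(X,1)$. If the connectedness statement requires $\dim\geq1$, it holds because $n\geq4$. This is exactly where the hypothesis $n\geq4$ enters: for $n=3$ the fibre is six points.
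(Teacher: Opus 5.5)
Your proof is correct and follows the paper's argument: for general $x$ the fibre is a complete intersection of positive dimension, hence connected, and it is smooth because non-free lines sweep out a proper closed subset of $X$, so it is irreducible. The only difference is that you check two facts directly where the paper cites references: the complete-intersection property (from the equations $f_1=f_2=f_3=0$ and the dimension count), which the paper takes from Glas, and smoothness of $\ev$ at free lines (via surjectivity of the tangent map), which the paper takes from Koll\'ar.
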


		\begin{proof}
			For a general point $x\in X$, the proof of \cite[Proposition 4.2]{glas2025} implies that $\ev^{-1}(x)$ is a complete intersection. Hence it is connected if $n\geq4$. By Lemma \ref{DimofNonFreeLines}, the union of non-free lines on $X$ is a proper closed subset. Therefore by \cite[3.5.4 Corollary]{Kollar1996rational}, $\ev^{-1}(x)$ is smooth. Therefore the general fibre is irreducible.
		\end{proof}

\section{Free curves and the irreducibility}

	We use the following result proved in \cite{glas2025}.

	\begin{prop}\label{DimOfFibres}
		Let $X$ be a smooth cubic hypersurface of dimension $n\geq3$. For an integer $d\geq1$, let $\ev_d:\ol{M}_{0,1}(X,d)\ra X$ be the evaluation map. Then there exists a finite set $K\subset X$ such that
		\begin{align*}
			\dim\ev_d^{-1}(x)=
			\begin{cases}
				(n-1)d-2\qquad\text{if}\quad x\notin K,\\
				(n-1)d-1\qquad\text{if}\quad x\in K.
			\end{cases}
		\end{align*}
	\end{prop}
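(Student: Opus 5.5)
This proposition is attributed to \cite{glas2025}. The plan below only shows how it fits with the tools of the paper; the real input is a counting argument on spaces of parametrized maps.

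\textbf{Step 1: lower bound.} Every component of $\ol{M}_{0,0}(X,d)$ has dimension at least the expected one, $-K_X\cdot C+n-3=(n-1)d+n-3$. Hence every component of $\ol{M}_{0,1}(X,d)$ has dimension at least $(n-1)d+n-2$. If a component dominates $X$, its general fibre has dimension $\geq (n-1)d-2$. At a point $x$ lying on a free curve of degree $d$, the fibre has exactly this dimension near that curve. Free curves exist for $d=1$ by Proposition \ref{LinesOnCubic}, and for larger $d$ we can take multiple covers of free lines or smoothings of chains of them.

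\textbf{Step 2: upper bound, the main obstacle.} We must show $\dim\ev_d^{-1}(x)\leq (n-1)d-2$ for all $x$ outside a finite set $K$, and $\leq (n-1)d-1$ for every $x$. Here I would follow the Browning--Sawin/Glas approach.
\begin{itemize}
\item Parametrize rational curves through $x$ by tuples of binary forms $(f_0,\dots,f_{n+1})$ of degree $d$, with $x$ imposed at $t=0$.
\item The condition $F(f)=0$ gives $3d+1$ equations. Since $x\in X$, the constant coefficient vanishes automatically, leaving $3d$ equations.
\item The parameter space has dimension $(n+2)(d+1)-(n+2)$ after fixing the point. Subtracting the $3d$ equations and quotienting by $\mathrm{Aut}(\bP^1,0)$ and scalars gives expected dimension $(n-1)d-2$.
\end{itemize}
So the content of the statement is that these equations cut the expected codimension. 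I would prove this by induction on $d$, degenerating to the boundary, where curves through $x$ break into a piece of degree $d_1$ through $x$ and a piece of degree $d_2$ attached to it. For the base case, the fibre of $\ev_1$ over $x$ consists of the lines through $x$. It is cut out in $\bP(T_x\bP^{n+1})\cong\bP^{n}$ by the tangent hyperplane, the second-order equation and the third-order equation, so it has dimension $n-3=(n-1)-2$. The dimension jumps only when the second fundamental form degenerates along the tangent directions at $x$, i.e.\ at generalized Eckardt-type points. When the characteristic is not $2,3$, the Gauss map is finite and birational, and a Hessian argument shows such points are finite in number. Even there, the cone of lines has dimension $\leq n-2$, since $X$ is not a cone.

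\textbf{Step 3: induction.} Bound the dimension of each boundary stratum using the fibre dimension of $\ev_{d_1}$ at $x$, plus the dimension of $\ol{M}_{0,1}(X,d_2)$ over the attaching points. Excess at $x\in K$ is counted once, giving the $+1$. For the interior (irreducible domain) locus, use the bend-and-break estimate: a component of the fibre of dimension $>(n-1)d-2$ would force too many curves through two points. Keeping the estimate uniform at the finitely many special points, rather than only generically, is where I expect the real difficulty. There I would rely on the analysis of the singular locus of the system of equations, as in \cite{glas2025}.
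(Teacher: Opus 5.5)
The paper does not prove this proposition; it only quotes \cite[Proof of Theorem 1.3]{glas2025}. Your deferral to that reference is therefore consistent with the paper, and your Step 1 and the bend-and-break skeleton of Step 3 are sound.

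Your sketch, however, puts the difficulty in the wrong place. Assume the statement at \emph{every} point in all degrees $e<d$. If $x$ is not exceptional, the boundary of $\ev_d^{-1}(x)$ has dimension at most $(n-1)d_1-2+1+(n-1)d_2-2=(n-1)d-3$. If the attaching point is exceptional, you lose the one-dimensional choice of attaching point, which exactly compensates the excess of the fibre there. If $x$ itself is exceptional, the same count gives at most $(n-1)d-2$. Bend-and-break is characteristic free, so every component of $\ev_d^{-1}(x)$ of dimension $m\geq n$ meets the boundary in dimension $\geq m-1$. This gives $m\leq (n-1)d-2$, resp.\ $m\leq(n-1)d-1$. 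Components with $m\leq n-1$ are harmless, since $n-1\leq (n-1)d-2$ for $d\geq 2$. So uniformity at the special points is automatic, and the exceptional set in degree $d$ lies inside the one for lines. No analysis of the singular locus of the equations is needed.

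The genuinely characteristic-sensitive input is the base case, and there your justification does not go through as stated. The lines through $x$ form an $(n-2)$-dimensional family exactly when the second fundamental form vanishes on $T_xX$; a common component of the quadric and the cubic would put a $\bP^{n-1}$ on $X$. Equivalently, these are the points where $d\gamma_X$ vanishes on $T_xX$. In characteristic $0$ a curve of such points would be contracted by the finite map $\gamma_X$. In characteristic $p$, a map with identically vanishing differential along a curve can be inseparable rather than constant. Hence finiteness and birationality of $\gamma_X$ do not by themselves make this locus finite, and ``a Hessian argument'' does not supply the missing step.

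One way to close the gap with the paper's own tools is as follows.
\begin{itemize}
\item If $d\gamma_X$ vanishes at $x$, then for every line $L\ni x$ on $X$ the map $\gamma_X|_L$ has zero differential at $x$. So it is not an isomorphism onto a smooth conic, and $L$ is non-free by the criterion in the proof of Lemma \ref{DimofNonFreeLines}.
\item Suppose the exceptional locus contained a curve $E_0$. The pairs $(x,L)$ with $x\in E_0$ and $L\ni x$ form an $(n-1)$-dimensional family mapping to $\Lambda_1$, and $\dim\Lambda_1\leq n-2$.
\item Hence a general such $L$ would lie in $E_0$. This is absurd: an irreducible curve contains at most one line, while the lines through each point of $E_0$ form a positive-dimensional family.
\end{itemize}
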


		\begin{proof}
			This is proved in \cite[Proof of Theorem 1.3]{glas2025}.
		\end{proof}
	
	\begin{prop}\label{IrredDomain}
		Let $X$ be a smooth cubic hypersurface of dimension $n\geq4$ and let $K\subset X$ be a subset as in Proposition \ref{DimOfFibres}. If $p\notin K$, the fibre $\ev_d^{-1}(p)$ generically parametrizes curves with irreducible domains.
	\end{prop}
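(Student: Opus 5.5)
We argue by a dimension count on the boundary of $\ev_d^{-1}(p)$. By Proposition \ref{DimOfFibres}, every irreducible component of $\ev_d^{-1}(p)$ has dimension exactly $(n-1)d-2$, since $p\notin K$. Conversely, every component has dimension at least this number: $\ol{M}_{0,1}(X,d)$ has every component of dimension at least the expected dimension $(n+1)d+n-3+1$, and the fibre of a map to the $n$-dimensional $X$ loses at most $n$ dimensions. So it suffices to show that the locus $B_p\subset\ev_d^{-1}(p)$ of stable maps with reducible domain has dimension strictly less than $(n-1)d-2$. Then no component of $\ev_d^{-1}(p)$ can be contained in $B_p$.

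To bound $B_p$, we stratify by the combinatorial type of the domain. A map in $B_p$ lies in the image of a gluing morphism. The simplest case is a domain with two components of degrees $d_1+d_2=d$, $d_i\ge1$, where the marked point lies on the first component. Such a map is determined by
\begin{itemize}
\item[(i)] a $2$-pointed map $f_1$ of degree $d_1$ with $f_1(x_1)=p$, and
\item[(ii)] a $1$-pointed map $f_2$ of degree $d_2$ with $f_2(y)=f_1(x_2)$.
\end{itemize}
The space of data (i) is $\ev_{d_1}^{-1}(p)\times_{\ol M_{0,1}}\ol M_{0,2}$, which has dimension $(n-1)d_1-2+1$. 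For data (ii) we split according to whether the node $q=f_1(x_2)$ lies in $K$.
\begin{itemize}
\item If $q\notin K$, the fibre $\ev_{d_2}^{-1}(q)$ has dimension $(n-1)d_2-2$. The total is $(n-1)d-3$, which is one less than needed.
\item If $q\in K$, the fibre has dimension one larger. However, requiring $f_1(x_2)$ to be one of the finitely many points of $K$ imposes codimension at least $1$ on the choice of $x_2$, and in fact more. Indeed, the curves of degree $d_1$ through both $p$ and a fixed point $q$ form a family of dimension at most $(n-1)d_1-2+1-(n-1)$ or so. This can be argued via Proposition \ref{DimOfFibres} applied at $q$ together with the evaluation at a second point, since the curves through $q$ sweep out at most a subvariety whose intersection with the generic behaviour at $p$ costs codimension. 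The total is again at most $(n-1)d-3$.
\end{itemize}
Domains with more components are handled inductively in the same way: each additional node costs one dimension, and contracted components do not add dimension for stable maps.

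The main obstacle is the $q\in K$ case, and more generally controlling the jump at points of $K$ at every node of a tree of components. The plan there is to count via $\ol M_{0,2}(X,d_1)\to X\times X$: the locus of maps sending $x_1\mapsto p$ and $x_2\mapsto q\in K$ has dimension at most $\dim\ev_{d_1}^{-1}(q)+1-(n-1)$. This uses that the second marked point sweeps the curve, so requiring it to hit the fixed general point $p$ imposes roughly $n-1$ conditions when $n\ge4$, because $p\notin K$ is general with respect to the finitely many curve families through $K$. This gives ample room: $(n-1)d_1-1+1-(n-1)+(n-1)d_2-1<(n-1)d-2$.

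If this direct count fails for special $p$, the fallback is to note that the statement only needs, for each component of $\ev_d^{-1}(p)$, a point with irreducible domain. Then one can argue by smoothing: a node mapped to a non-$K$ point joins two curves through general points, which are free by induction on degree. Gluing free curves gives unobstructed nodal maps, which smooth to irreducible domains (Kollár).
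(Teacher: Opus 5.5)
Your reduction is the natural one; the paper itself just runs the argument of \cite[Proposition 2.5]{CoskunStarr2009} with Proposition~\ref{DimOfFibres} as input. Every component of $\ev_d^{-1}(p)$ has dimension at least $(n-1)d+n-2-n=(n-1)d-2$, so it suffices to show the reducible locus has dimension at most $(n-1)d-3$.

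Two small corrections. First, since $-K_X=\cO_X(n-1)$, the expected dimension is $(n-1)d+n-2$, not $(n+1)d+n-2$; as written, your lower bound would contradict Proposition~\ref{DimOfFibres}. Second, gluing a $2$-pointed map of degree $d_1\ge1$ to a $1$-pointed map of degree $d_2\ge1$ already covers every reducible map, because the pieces may themselves be reducible. So no separate treatment of longer chains is needed. Your count when the node lands outside $K$ is correct.

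The gap is the stratum where the node lands on $q\in K$. Write $A=\pi^{-1}(\ev_{d_1}^{-1}(p))$ and let $A_q\subset A$ be the locus where $f_1(x_2)=q$. What is needed is exactly $\dim A_q\le(n-1)d_1-2$. You assert this codimension-one statement without a reason. The stronger bound you try to prove instead, and plan around in your last paragraphs, treats $p$ as general with respect to the curves through $q$. But the hypothesis is only $p\notin K$, and nothing controls the fibre over a special $p$ of the family of curves through $q$.

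The smoothing fallback has the same defect. Neither $p$ nor the nodes are general, so freeness of the pieces (the delicate point in positive characteristic) is not available. You would need unobstructedness, with the marked point held at $p$, at a general point of every hypothetical boundary component, and nothing gives that.

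The missing step is an induction on $d$ at the fixed point $p$. $A_q$ could have full dimension only if the general member of some component $Z$ of $\ev_{d_1}^{-1}(p)$ contracted a component of its domain onto $q$. By the proposition in degree $d_1<d$, on a dense open subset of $Z$ the domain is $\bP^1$ and $f_1$ is nonconstant. There $f_1^{-1}(q)$ is finite, so this part of $A_q$ has dimension at most $\dim Z=(n-1)d_1-2$. Over the complement, which has dimension at most $\dim Z-1$, the fibres of $A$ are curves and contribute at most $\dim Z$. Hence $\dim A_q\le(n-1)d_1-2$, and the $q\in K$ stratum has dimension at most $(n-1)d_1-2+(n-1)d_2-1=(n-1)d-3$. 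Together with the trivial case $d=1$, the argument closes.
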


		\begin{proof}
			Using Proposition \ref{DimOfFibres}, the proof of \cite[Proposition 2.5]{CoskunStarr2009} works.
		\end{proof}

	\begin{lem}\label{DimofNonFreeCurvesonCubic}
		Let $X$ be a smooth cubic hypersurface of dimension $n\geq4$. Then for any positive integer $d$, the locus in $\ol{M}_{0,0}(X,d)$ parametrizing stable maps with at least one non-free component has dimension at most $d(n-1)$. In particular, every component of $\ol{M}_{0,0}(X,d)$ generically parametrizes free curves.
	\end{lem}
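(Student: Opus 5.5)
We argue by induction on $d$, with Lemma \ref{DimofNonFreeLines} giving the case $d=1$: there the non-free locus $\Lambda_1$ has dimension at most $n-2\le n-1$. The expected dimension of $\ol{M}_{0,0}(X,d)$ is $-K_X\cdot C+n-3=(n-2)d+n-3$, and every component has at least this dimension. Since $(n-2)d+n-3>d(n-1)$ fails in general, we will not compare against the expected dimension directly. Instead we aim for the bound $d(n-1)$ for the non-free locus, and separately show that each component has dimension strictly larger than that locus, as explained below. Write $\Lambda_d\subset\ol{M}_{0,0}(X,d)$ for the locus of stable maps with at least one non-free component.

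First we treat reducible domains. A stable map in $\Lambda_d$ with reducible domain lies in the image of a gluing map
\[
\ol{M}_{0,1}(X,d_1)\times_X\ol{M}_{0,1}(X,d_2)\to\ol{M}_{0,0}(X,d),\qquad d_1+d_2=d,
\]
and one factor, say the first, has a non-free component. We bound the fibre product by pulling back over $X$ and using Proposition \ref{DimOfFibres}. Over a point $x\notin K$, the fibre of the second factor has dimension $(n-1)d_2-2$. The marked non-free locus $\widetilde\Lambda_{d_1}\subset\ol{M}_{0,1}(X,d_1)$ has dimension at most $d_1(n-1)+1$ by induction. This gives the estimate $d_1(n-1)+1+(n-1)d_2-2=d(n-1)-1$. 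Over the finite set $K$, the fibres have dimensions $(n-1)d_i-1$, so that part contributes at most $(n-1)d_1-1+(n-1)d_2-1\le d(n-1)$. The non-free piece may itself be degenerate, with the non-free component sitting deep inside a tree, but the same fibre-product count applied recursively handles it. So reducible domains contribute at most $d(n-1)$.

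Next we treat irreducible domains $f\colon\bP^1\to X$ that are non-free. Following \cite{BeheshtiRiedl2021linear}, we consider the marked version $\widetilde\Lambda^{\circ}_d\subset\ol{M}_{0,1}(X,d)$ and its image in $X$. If $f$ is non-free, then $f^*T_X$ has a summand of degree $\le -1$. In characteristic $0$ one would argue that $\ev$ restricted to a component of $\widetilde\Lambda^\circ_d$ is non-dominant. Here inseparability breaks that argument, so we use Proposition \ref{DimOfFibres} instead. The fibres of $\ev$ over $X\setminus K$ have dimension $(n-1)d-2$, so any family whose image in $X$ has dimension at most $n-1$ has dimension at most $(n-1)d-2+n-1+1$ before forgetting the point, which is too weak. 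The main obstacle is to prove that non-free curves through a general point form a locus of strictly smaller dimension in the fibre, or that their images sweep out a low-dimensional set. We would first try to degenerate: the non-free locus is closed. The bound $d(n-1)$ also comfortably exceeds the dimension of the reducible boundary. So it suffices to show that every component $Z$ of $\Lambda_d$ either meets the boundary or has dimension at most $d(n-1)$. A family of irreducible rational curves through a fixed general point $p$ with $\dim\ge2$ contains a reducible curve, by bend-and-break. Hence a component of $\Lambda_d$ of dimension greater than $d(n-1)$ would have $\ev$-fibres of dimension at least $2$, and therefore would meet the boundary in codimension one. This forces $\dim Z\le d(n-1)+1$ minus corrections. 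We expect to sharpen this to $d(n-1)$ using the induction for the boundary together with the codimension-one intersection.

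Finally we deduce the last statement. Every component of $\ol{M}_{0,0}(X,d)$ has dimension at least $(n-2)d+n-3$. When $n\ge4$ this exceeds $d(n-1)$ only for small $d$, which is the problematic range. So we would instead compare with the boundary: by Proposition \ref{IrredDomain}, a general curve in each component has irreducible domain. A component consisting generically of non-free curves would then lie in $\Lambda_d$ with irreducible generic member, and we would combine the bound with the fibre dimension in Proposition \ref{DimOfFibres} to rule this out.
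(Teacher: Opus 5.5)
\textbf{There are two genuine gaps.} Your architecture matches the paper's: induction on $d$, a gluing count for reducible domains via Proposition \ref{DimOfFibres}, and bend-and-break to push components of $\Lambda_d$ into the boundary. The gaps are the following.

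\emph{First gap: the final deduction rests on an arithmetic error.} For a cubic in $\bP^{n+1}$ one has $K_X=(-(n+2)+3)H|_X=-(n-1)H$. So $-K_X\cdot C=(n-1)d$, and the expected dimension is $(n-1)d+n-3$, not $(n-2)d+n-3$. This agrees with Proposition \ref{DimOfFibres}, which gives $n+(n-1)d-2-1$. Since $n\ge4$, the expected dimension exceeds $d(n-1)$ by $n-3\ge1$. Hence the ``in particular'' is immediate: every component of $\ol{M}_{0,0}(X,d)$ has dimension at least $(n-1)d+n-3>d(n-1)\ge\dim\Lambda_d$. Your substitute argument does not work. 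Proposition \ref{IrredDomain} concerns fibres of $\ev_d$ over points $p\notin K$ and says nothing about freeness, and ``combining the bound with Proposition \ref{DimOfFibres}'' is not an argument. With your numbers there would be no dimensional contradiction at all once $d\ge n-3$.

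\emph{Second gap: the irreducible-domain step is unfinished, and its bend-and-break is false as stated.} You leave this step at ``we expect to sharpen this''. The one-point bend-and-break you quote fails: lines through a point of $\bP^N$, $N\ge3$, form a complete family of dimension $N-1\ge2$ of irreducible rational curves through a fixed point, and none of them breaks. What you need is the two-point version, applied to a component $Z\subset\Lambda_d$:
\begin{itemize}
\item If $\dim Z\le 2n-2$, there is nothing to prove, since $2n-2\le d(n-1)$ for $d\ge2$.
\item Otherwise the family of $2$-pointed maps over $Z$ has dimension $\dim Z+2>2n\ge\dim(Y\times Y)$, where $Y\subset X$ is the swept-out locus.
\item So through two general points of $Y$ there is a positive-dimensional family of maps in $Z$, and bend-and-break shows that the reducible-domain locus of $Z$ has codimension at most one.
\item Since the non-free locus is closed, those reducible maps still have a non-free component, so they lie in the locus you bounded by gluing.
\end{itemize}
No ``corrections'' are needed, because your own gluing count already gives $d(n-1)-1$. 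The part over $X\setminus K$ gives $d_1(n-1)+1+(n-1)d_2-2=d(n-1)-1$. The part over $K$ gives $(n-1)d_1-1+(n-1)d_2-1=d(n-1)-2$; you rounded this up to $\le d(n-1)$, which throws away exactly the unit you need. Hence $\dim Z\le (d(n-1)-1)+1=d(n-1)$, which is precisely the paper's argument.
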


		\begin{proof}
			We mimic the proof of \cite[Theorem 5.1]{BeheshtiRiedl2021linear}.
			
			The proof proceeds by induction on the degree $d$. The case of $d=1$ follows from Lemma \ref{DimofNonFreeLines}. 
			Suppose that $d\geq2$ and the assertion holds for every degree less than $d$. Let $\Lambda_d\subset\ol{M}_{0,0}(X,d)$ be a closed subscheme generically parametrizing stable maps with at least one non-free component. If $\dim\Lambda_d\leq2n-2$, there is nothing to prove. Hence we assume $\dim\Lambda_d>2n-2$. Let $Y$ be the subvariety of $X$ swept out by the image of maps parametrized by $\Lambda_d$. Then $\dim Y\leq n$, and thus there are two distinct points of $Y$ and at least $1$-dimensional family of stable maps parametrized by $\Lambda_d$ through these two points. Therefore we can apply the bend and break lemma; we see that the locus $\Lambda'_d\subset\Lambda_d$ parametrizing stable maps with reducible domain has codimension at least $1$ in $\Lambda_d$, and thus there exists a positive integer $e<d$ such that the locus of stable maps which decompose as a degree $e$ stable map with at least one non-free component glued at a point to a degree $d-e$ stable map has dimension at least $\dim\Lambda_d-1$.
			
			By induction hypothesis and Proposition \ref{DimOfFibres}, the locus $\Lambda'_d$ of glued map has dimension at most
			$$\{e(n-1)\}+\{(d-e)(n-1)-2\}+1=d(n-1)-1,$$
			and thus
			$$\dim\Lambda_d\leq\dim\Lambda'_d+1\leq d(n-1).$$
			So we obtain the desired result for $d$.
		\end{proof}

	Before the proof of Theorem \ref{MainThm}, we recall a notion introduced in \cite[Definition 5.6]{LT2019GMC}, which we will use in the proof. Let $M''_i$ be an irreducible component of $\ol{M}_{0,2}(X)$ for $i=1,\ldots,r-1$, and $M'_r$ be an irreducible component of $\ol{M}_{0,1}(X)$. Consider the fibre product
	$$M''_1\times_XM''_2\times_X\cdots\times_XM''_{r-1}\times_XM'_r.$$
	A \emph{main component} is any component of the product which dominates every parameter spaces $M''_i$ and $M'_r$ under each projection map.

	\begin{proof}[Proof of Theorem 1.1]
		The proof is modeled on \cite[Proof of Theorem 1.1]{CoskunStarr2009}.
		
		The case of $d=1$ follows from Proposition \ref{LinesOnCubic}. Let $\ev_d:\ol{M}_{0,1}(X,d)\ra X$ be the evaluation map. By Lemma \ref{DimofNonFreeCurvesonCubic}, every irreducible component of $\ol{M}_{0,0}(X,d)$ generically parametrizes free curves. In particular, the restriction of the evaluation map to each component is dominant. Thus it suffices to show the irreducibility of the general fibre of $\ev_d$. We prove it by induction on $d$. The case of $d=1$ is proved in Lemma \ref{IrredFibres}. Suppose $d\geq2$ and the assertion holds for $e<d$. Let $\Delta_{\{e,\{1\}\},\{d-e,\emptyset\}}$ be a divisor on $\ol{M}_{0,1}(X,d)$ parametrizing stable maps that decompose as a degree $e$ stable map with a marked point and a degree $d-e$ stable map glued at a point. Take a general point $x\in X$ and let $\pi:\ol{M}_{0,2}(X,e)\ra\ol{M}_{0,1}(X,e)$ be the forgetful morphism of second marked points. We show the following claim:
		\begin{claim}\label{claim1}
			The main component of $\pi^{-1}(\ev_e^{-1}(x))\times_X\ol{M}_{0,1}(X,d-e)$ is unique. Moreover, $\ev_d^{-1}(x)\cap\Delta_{\{e,\{1\}\},\{d-e,\emptyset\}}$ is irreducible.
		\end{claim}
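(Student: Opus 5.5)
Let $F_e := \ev_e^{-1}(x)\subset\ol{M}_{0,1}(X,e)$, with $x\in X$ general. By the induction hypothesis, $F_e$ is irreducible for $e<d$, and the same holds for $d-e$ at general points. The plan is to treat $\pi^{-1}(F_e)\times_X\ol{M}_{0,1}(X,d-e)$ as a family over $\pi^{-1}(F_e)$ via the second evaluation map, and to prove irreducibility of the main component by analyzing this map.

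First, $\pi^{-1}(F_e)$ is irreducible. Indeed, $F_e$ is irreducible, and by Proposition \ref{IrredDomain} together with Lemma \ref{DimofNonFreeCurvesonCubic} its general point is a free curve with irreducible domain. Over this open set, $\pi$ is the universal curve, whose fibres are $\bP^1$. The fibres over the boundary have the same dimension, and the boundary of $F_e$ has smaller dimension. So $\pi^{-1}(F_e)$ has a unique component of dimension $\dim F_e+1$, and it is the closure of the $\bP^1$-bundle part. Next, consider the second evaluation map $\ev_2:\pi^{-1}(F_e)\to X$. Since a general member of $F_e$ is free, the curves through $x$ deform in enough directions that $\ev_2$ is dominant on this component. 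This is the standard fact that free curves through a fixed general point sweep out $X$.

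Now look at the fibre product $\pi^{-1}(F_e)\times_X\ol{M}_{0,1}(X,d-e)$. A main component must dominate $\pi^{-1}(F_e)$, and by the previous step the image of a general point of $\pi^{-1}(F_e)$ under $\ev_2$ is a general point $y\in X$. The fibre of the product over such a point is $\ev_{d-e}^{-1}(y)$, which is irreducible of the expected dimension by the induction hypothesis, Proposition \ref{DimOfFibres}, and genericity of $y$. So over a dense open subset $U\subset\pi^{-1}(F_e)$, the projection is a morphism with irreducible equidimensional fibres. I will make it flat (or at least open) by shrinking $U$. Then exactly one irreducible component dominates $U$, and any other component lying over $U$ would have to sit in a proper closed subset of $U$. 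This gives uniqueness of the main component. The main obstacle is this step: verifying that irreducibility of the general fibre, combined with irreducibility of the base, forces a unique dominating component. I would argue via the generic fibre: it is geometrically irreducible because the fibres over general points of $U$ are irreducible, and separatedness in positive characteristic is not needed for this. Then every dominating component contains the generic fibre, so the dominating component is unique.

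For the second statement, $\ev_d^{-1}(x)\cap\Delta_{\{e,\{1\}\},\{d-e,\emptyset\}}$ is the image of the fibre product under the gluing map, taking the union over the relevant components. Its dimension is at most $\dim F_e+1+\dim F_{d-e}\text{-fibres}+ (n-1)\cdot 0$. By Proposition \ref{DimOfFibres}, the non-main pieces, namely those lying over non-general points $y$ or over the boundary of $\pi^{-1}(F_e)$, have dimension strictly smaller than the main component. On the other hand, every component of a divisor inside $\ev_d^{-1}(x)$ has dimension at least $(n-1)d-3$, which is the dimension of the main piece: $((n-1)e-2)+1+((n-1)(d-e)-2)$. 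Hence every component of the intersection equals the image of the main component, so the intersection is irreducible.
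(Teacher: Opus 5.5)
The gap is in your second step: the claim that evaluation at the second marked point, $\pi^{-1}(\ev_e^{-1}(x))\ra X$, is dominant because ``free curves through a fixed general point sweep out $X$''. That standard fact is about \emph{very} free curves (all $a_i\geq1$ in $f^*T_X\cong\bigoplus\cO(a_i)$), not free ones. The dominance is in fact false for $e=1$. By Proposition \ref{DimOfFibres}, the lines through a general $x$ form an $(n-3)$-dimensional family, so they sweep out only a cone of dimension $n-2$. A general line has $T_X|_L\cong\cO(2)\oplus\cO^{\oplus2}\oplus\cO(1)^{\oplus(n-3)}$, so it is free but not very free. This case is not marginal: the claim is applied with $e=1$ later in the proof of Theorem \ref{MainThm}, through $\pi_{\{1,d-1\},1}$. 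Two steps of yours rest on this false dominance: identifying the general fibre of the first projection with $\ev_{d-e}^{-1}(y)$ for a \emph{general} $y$, and asserting in the last step that the pieces over non-general $y$ are small.

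What you need is weaker, and it is true. Write $\ev^{(i)}:\ol{M}_{0,2}(X,e)\ra X$ for evaluation at the $i$-th marked point, so that $(\ev^{(1)})^{-1}(x)=\pi^{-1}(\ev_e^{-1}(x))$. By Proposition \ref{DimOfFibres} and the expected-dimension lower bound, $\ol{M}_{0,2}(X,e)$ is pure of dimension $(n-1)e+n-1$, and each component dominates $X$ under $\ev^{(2)}$. Let $Z\subset X$ be a proper closed subset containing $K$ and the points $y$ with $\ev_{d-e}^{-1}(y)$ reducible; this exists by the induction hypothesis. Then $\dim(\ev^{(2)})^{-1}(Z)\leq(n-1)e+n-2$. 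For general $x$, its intersection with $(\ev^{(1)})^{-1}(x)$ has dimension $\leq(n-1)e-2<(n-1)e-1=\dim\pi^{-1}(\ev_e^{-1}(x))$. Likewise $\dim(\ev^{(2)})^{-1}(K)\leq(n-1)e$ gives dimension $\leq(n-1)e-n$ after fixing $x$, which absorbs the jump of fibre dimension over $K$ in your final count. The paper's one-line appeal to the induction hypothesis tacitly relies on exactly this fact. Once you substitute it for the dominance claim, the rest of your argument follows the paper's route, and more explicitly than the paper's final sentence. That includes the unique top-dimensional component of $\pi^{-1}(\ev_e^{-1}(x))$, geometric irreducibility of the generic fibre via constructibility, and the lower bound $(n-1)d-3$ for every component of $\Delta_{\{e,\{1\}\},\{d-e,\emptyset\}}\cap\ev_d^{-1}(x)$, which matches the dimension of the main piece.
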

			\begin{proof}
				The general fibre of the first projection
				$$\pi_1:\pi^{-1}(\ev_e^{-1}(x))\times_X\ol{M}_{0,1}(X,d-e)\ra\pi^{-1}(\ev_e^{-1}(x))$$
				is irreducible by the induction hypothesis. This means the dominant component of $\pi_1$ is unique.
				By Proposition \ref{IrredDomain}, the general map in $\ev_e^{-1}(x)$ has irreducible domain. Hence the component of $\pi^{-1}(\ev_e^{-1}(x))$ which has the maximal dimension is unique, since $\ev_e^{-1}(x)$ is irreducible by the induction hypothesis. Consider the restriction of the glueing map to $\pi^{-1}(\ev_e^{-1}(x))\times_X\ol{M}_{0,1}(X,d-e)$. Note that its image coincides with $\ev_d^{-1}(x)\cap\Delta_{\{e,\{1\}\},\{d-e,\emptyset\}}$. By Proposition \ref{DimOfFibres}, we conclude that $\ev_d^{-1}(x)\cap\Delta_{\{e,\{1\}\},\{d-e,\emptyset\}}$ is irreducible.
			\end{proof}
		We now return to the proof of Theorem \ref{MainThm}. Let $M$ be an irreducible component of $\ev_d^{-1}(x)$ for a general point $x\in X$. By the bend and break lemma, $M$ must contain stable maps with reducible domains in codimension $1$. By Claim \ref{claim1}, $M$ must contain the entire intersection of at least one boundary component $	\Delta_{\{e,\{1\}\},\{d-e,\emptyset\}}$ with $\ev_d^{-1}(x)$ for some $e<d$. Now we show that every component $M$ must contain chains of free lines.
		
		Take a general stable map $(f_2:C_2\ra X,p)\in\ol{M}_{0,1}(X,d-e)$ where $p\in C_2$ is a marked point. By Proposition \ref{DimOfFibres}, for any stable map $(f_1:C_1\ra X,x,p)\in\pi^{-1}(\ev_e^{-1}(x))\subset\ol{M}_{0,2}(X,e)$, the fibre $\pi_1^{-1}((f_2,p))$ contains all stable maps consisting of $(f_1,x,p)$ and $(f_2,p)$ glued at $p$. Take $(f_1,x,p)$ such that $f_1:L\cup C_1'\ra X$ where $L$ is a general line through $x$ and $f_1|_{C_1'}$ is a degree $e-1$ stable map whose image contains $x$ and $p$. Then we see that $M$ contains a stable map $(f:L\cup C_1'\cup C_2\ra X,x)$.
		
		Consider the projection
		$$\pi_{\{1,d-1\},1}:\pi^{-1}(\ev_1^{-1}(x))\times\ol{M}_{0,1}(X,d-1)\ra\pi^{-1}(\ev_1^{-1}(x)).$$
		Recall that $\pi:\ol{M}_{0,2}(X,1)\ra\ol{M}_{0,1}(X,1)$ is the forgetful morphism. By the induction hypothesis, the general fibre of $\pi_{\{1,d-1\},1}$ is irreducible. Since the above $L\in\ev_1^{-1}(x)$ is general, the fibre $\pi_{\{1,d-1\},1}^{-1}(L)$ contains the stable map $(f:L\cup C_1'\cup C_2\ra X,x)$. Moreover, it is contained in the unique main component. This implies the intersection of $\Delta_{\{1,\{1\}\},\{d-1,\emptyset\}}$ with $M$ is nonempty. By Claim \ref{claim1} again, $M$ must contain the entire intersection of at least one boundary component $	\Delta_{\{e,\{1\}\},\{d-e,\emptyset\}}$ with $\ev_d^{-1}(x)$. In particular, it contains chains of free lines. Since a point representing a chain of free curves is a smooth point in $M$, we obtain the irreducibility of $\ev_d^{-1}(x)$. Hence $\ol{M}_{0,0}(X,d)$ is irreducible.
	\end{proof}

\section{Free curves on cubic threefolds}

	Let $X$ be a smooth cubic threefold defined over an algebraically closed field $k$ of characteristic $\neq2,3$. By the same argument as in Lemma \ref{DimofNonFreeCurvesonCubic}, we can prove that $\ol{M}_{0,0}(X,d)$ generically parametrizes free rational curves for any $d\geq1$. However, for dimension reasons, we have to check the case of the space of conics $\ol{M}_{0,0}(X,2)$ directly.

	\begin{lem}\label{Conics}
		The space $\ol{M}_{0,0}(X,2)$ consists of two irreducible components $R_2$, $N_2$, where $R_2$ generically parametrizes birational stable maps from irreducible curves to smooth conics and $N_2$ parametrizes degree $2$ covers from $\bP^1$ to lines. Both of them generically parametrize free curves.
	\end{lem}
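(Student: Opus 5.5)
Throughout, let $X$ be a smooth cubic threefold, so $n=3$. The expected dimension of $\ol{M}_{0,0}(X,2)$ is $-K_X\cdot C+n-3=2\cdot2=4$, and every component has dimension at least $4$. I will describe the whole space by sorting stable maps according to their image, and then check freeness on each piece.

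\textbf{Case 1: image a line.} If the image is a line $L$, the map is either a double cover $\bP^1\ra L$ or a map from a reducible domain $\bP^1\cup\bP^1$ sending both components isomorphically onto $L$. Lines form the Fano surface $\ol{M}_{0,0}(X,1)$, which is $2$-dimensional and irreducible by Proposition \ref{LinesOnCubic}. Double covers of a fixed line form a $2$-dimensional family, so this locus $N_2$ is irreducible of dimension $4$. It is therefore a component, since it is a closed subset of the right dimension. A general element is a double cover of a general line $L$, and $L$ is free by Lemma \ref{DimofNonFreeLines}. Pulling back $N_{L/X}$, which is globally generated, gives a globally generated bundle, so the double cover is free. (Here $\deg T_{\bP^1}=2$ and the characteristic is $\neq2$, so the double cover is separable and $T_{\bP^1}\ra f^*T_X$ is injective.)

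\textbf{Case 2: image a pair of lines.} Stable maps whose image is two distinct lines meeting at a point lie in the boundary. Their dimension is bounded by $2+1=3$, because through a general point of a line $L$ there are finitely many other lines meeting it, and the incidence curve has dimension $1$. So this locus cannot form a component. It must lie in the closure of $R_2$; alternatively, a smoothing argument applies, since the nodal union of two free lines is unobstructed.

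\textbf{Case 3: image a smooth conic $C$.} The plane $P$ spanned by $C$ meets $X$ in $C\cup L$, where $L$ is the residual line. Conversely, each line $L$ together with a plane $P\supset L$ gives a residual conic. Such planes form the $\bP^2$ of planes containing $L$ inside $\bP^4$. So the conics (including degenerate ones) form a $\bP^2$-bundle over the Fano surface $\ol{M}_{0,0}(X,1)$, which is irreducible of dimension $4$ by Proposition \ref{LinesOnCubic}. Let $R_2$ be the closure of the birational maps onto these conics. By this bundle structure $R_2$ is irreducible, and it differs from $N_2$.

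For freeness of $R_2$, I would avoid a direct normal bundle computation. Instead, glue two general free lines $L_1,L_2$ meeting at a point; such a pair exists, since the lines meeting a general line form a curve, and a general such line is free because non-free lines form at most a curve in the surface. This nodal map is a free chain, hence smoothable to free conics. Since it is unobstructed, it is a smooth point of the $4$-dimensional moduli space, lying in $R_2$ and not in $N_2$. So $R_2$ generically parametrizes free curves.

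\textbf{Main obstacle.} The delicate step is showing that a general reducible conic $L_1\cup L_2$ really is a pair of \emph{free} lines, and that these pairs lie in $R_2$. The first part requires that the lines meeting a general line are not all non-free. I would get this from Lemma \ref{DimofNonFreeLines}: the non-free lines $\Lambda_1$ have dimension at most $1$, and a general line meets only finitely many members of any fixed $1$-dimensional family. This is the step I would try first.
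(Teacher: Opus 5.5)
Your proof is correct. For freeness it is exactly the paper's argument: a general point of $N_2$ is a double cover of a free line, and $R_2$ contains the unobstructed nodal union of two free lines, whose general smoothing is free.

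The difference is in the decomposition into $R_2$ and $N_2$. The paper simply cites \cite[Proof of Proposition 7.4]{LT2019GMC} and leaves implicit that this argument works in characteristic $\neq2,3$. You instead prove it directly, by classifying images and parametrizing conics by the $\bP^2$-bundle of planes through lines over the Fano surface. This gives a self-contained argument in which the characteristic-$p$ inputs are visible: irreducibility of the Fano surface (Proposition \ref{LinesOnCubic}), the absence of planes on $X$, and the lower bound $4$ on the dimension of every component.

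Some steps need to be ordered and stated with care:
\begin{itemize}
\item $N_2$ can be declared a component only after all three cases show $\dim\ol{M}_{0,0}(X,2)=4$.
\item The open subset of the bundle where the residual conic is smooth must be shown nonempty. Your smoothing of $L_1\cup L_2$ gives this: maps whose image is a line form a closed set, so a general smoothing of two distinct lines is birational onto a smooth conic.
\item The Case 2 locus lies in $R_2$ because any component through one of its points has dimension $\geq4$ and cannot be $N_2$.
\end{itemize}

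The step you flag as the main obstacle is fine as you argue it, but it can be bypassed. For a general $x\in X$, every line through $x$ is free by Lemma \ref{DimofNonFreeLines}. There are several such lines: by Proposition \ref{DimOfFibres} the fibre $\ev^{-1}(x)$ is a finite complete intersection of degrees $1,2,3$ in $\bP^3$, and it is reduced because its points are free lines. Any two of them give the required chain.
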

	
		\begin{proof}
			By \cite[Proof of Proposition 7.4]{LT2019GMC}, $\ol{M}_{0,0}(X,2)$ consists of two components $R_2$, $N_2$. Clearly $N_2$ parametrizes free curves. Glueing two free lines, we see that $R_2$ generically parametrizes free curves.
		\end{proof}

	\begin{proof}[Proof of Theorem 1.2]
		As in Lemma \ref{DimofNonFreeCurvesonCubic}, it suffices to show that the locus in $\ol{M}_{0,0}(X,d)$ parametrizing stable maps with at least one non-free component has dimension at most $2d-1$.
		The base case of induction is $d=2$ because we cannot use the bend and break lemma for the space of conics by a dimension reason. By Lemma \ref{Conics}, each component of $\ol{M}_{0,0}(X,2)$ generically parametrizes free curves. Hence the locus parametrizing stable maps with at least one non-free component in $\ol{M}_{0,0}(X,2)$ has dimension at most the expected dimension minus $1$, which is equal to $3=2\cdot(3-1)-1$, thus the assertion holds. The rest is the same as the proof of Lemma \ref{DimofNonFreeCurvesonCubic}.
	\end{proof}

\bibliographystyle{amsalpha}
\bibliography{cubic}

\end{document}